\documentclass[11pt]{amsart}

\usepackage{tikz}
\usepackage{color}

\begin{document}
\title[Distinguishing index]{Asymmetric edge-coloring of graphs with simple automorphism group}
\author{Mariusz Grech, Andrzej Kisielewicz}
\address{Faculty of Pure and Applied Mathematics, Wrocław University of Science and Technology \\
Wybrzeże Wyspiańskiego Str. 27,
50-370 Wrocław, Poland}
\email{[mariusz.grech,andrzej.kisielewicz]@pwr.edu.pl}
\thanks{
{Supported in part by Polish NCN grant 2016/21/B/ST1/03079}}

\newtheorem{Theorem}{Theorem}[section]
\newtheorem{Lemma}[Theorem]{Lemma}
\newtheorem{Example}[Theorem]{Example}

\begin{abstract}
The distinguishing index $D'(\Gamma)$ of a graph $\Gamma$ is the least number $k$ such that $\Gamma$ has an edge-coloring with $k$ colors preserved only by the  trivial automorphism.  In this paper we prove that if the automorphism group of a finite graph $\Gamma$ is simple, then its distinguishing index $D'(\Gamma)=2$.    
\end{abstract}
 
\keywords{distinguishing number, distinguishing index, automorphism group, graph, simple group}

\maketitle

{\color{black}

The distinguishing index  of a graph $\Gamma$ has been introduced by Pil\'sniak and Kalinowski \cite{KP} in 2015 to be the least number $d$ such that $\Gamma$ has an edge-coloring with $d$ colors breaking the symmetry of $\Gamma$ (i.e., such that no nontrivial automorphism of $\Gamma$ preserves this coloring). 
This is an analog to the notion of the distinguishing number $D(\Gamma)$ of a graph introduced by Albertson and Collins \cite{AC} in 1996, which has been defined in the same way for colorings of vertices.

Note that for asymmetric graphs we have $D(\Gamma)=D'(\Gamma)=1$. For other graphs  $D(\Gamma) \geq 2$, and it is conjectublack that almost all of them have the distinguishing number two (see \cite{CT,KP}). 
The  situation is similar for the distinguishing index, and the claim that  having the distinguishing index two is generic for asymmetric graphs has been supported by results in \cite{le,LPS,PT}.

The concepts of the distinguished number and distinguished index  generalize naturally to the distinguishing number of an arbitrary group action (\cite{tym, cha}).  Following this generalization,  it was realized in \cite{BC} that in permutation group theory the problem had been investigated for many years as a part of the study of set stabilisers of group actions, and some results  may be successfully applied. 
 In particular, a result by Gluck \cite{glu}  (obtained as early as in 1983) shows that if the order of the automorphism group of a graph $\Gamma$ is odd (and $> 1$), then both the distinguishing number and the distinguishing index of $\Gamma$ are two, $D(G)=D'(G)=2$. }

This paper is the sequel of \cite{GK}, where we have proven, in particular, that if the automorphism group of a finite graph $\Gamma$ is simple, then its distinguishing number $D(\Gamma) = 2$. To obtain this result we have described the distinguishing number for all possible actions of simple groups. Now, we apply the latter to show that for such graphs also the distinguishing index $D'(\Gamma) = 2$. This is not so straightforward 
{\color{black} 
as in the case of Gluck's result. }
It requires to consider some nonstandard action of the alternating groups 
and other simple groups, 
{\color{black}
and some special constructions of intransitive graphs. 
In fact, our paper illustrates a few phenomena that can occur when we approach the problem from the point of view of permutation groups.

At this point, we need to make a remark preventing quite frequent misunderstandings related to this topic.  
It is important to realize that even if two graphs have the same automorphism group their distinguishing numbers or indexes may be different. This is so, because these number depends on the way how the automorphism group acts (on vertices or edges),  not merely on its abstract algebraic structure. For example, both the clique $K_n$ and the star $K_{1,n}$ have the automorphism group isomorphic to the symmetric group $S_n$, but the reader can check that for $n\geq 6$, $D'(K_n)=2$ and $D'(K_{1,n})=n$. What counts here is that we have the same abstract group $S_n$, but two different actions of this group. 


 }

\section{Introduction}

{\color{black}
We assume that the reader is familiar with the basics of the group theory and permutation groups. The textbook \cite{DM} may serve as a standard reference.  
}

Let $G$ be a group acting on a set $X$ of points. 
The \emph{distinguishing number $D(G,X)$ of this action} is
the least number of colors needed to color the elements of $X$ such that the only color-preserving elements of $G$ are those that fix all points of $X$. 
{\color{black}
Note that if the action is faithful, then the only element of $G$ fixing all points in $X$ is the identity. Otherwise, all elements in the kernel of the action have this property.

In case, when $G$ is a permutation group and we consider its natural action on the base set $X$, then the action is faithful, and the distinguishing index $D(G,X)$ of the permutation group is the least number of colors needed to color the elements of $X$ so that no nontrivial permutation in $G$ preserves this coloring.

There are some concepts concerning the structure of permutation groups that are not standard and we need to explain them here.
}

A permutation group $G$ acting on a set $X$ is denoted $(G,X)$, or simply $G$, if the set $X$ is clear from the context. We write $xg$ for the image of $x\in X$ under the permutation $g\in G$.

Two permutation groups $(G,X)$ and $(H,Y)$ are 
\emph{permutation isomorphic} if there is a bijection $\lambda: X\to Y$ and an abstract group isomorphism $\psi : G\to H$ such that $\lambda(xg) = \lambda(x)\psi(x)$ for all $x\in X$ and $g\in G$. We write $G=H$ in such a case, and treat such groups as identical. We use known notations $S_n$ and $A_n$ for the symmetric and alternating groups on $n$ points, respectively. By $I_n$ we denote the trivial group on $n$ points, i.e., one consisting of the identity permutation only.  

For two permutation groups
$(G,X)$ and $(H,Y)$, by $G\oplus H$ we denote the \emph{direct sum} of these groups, i.e., the permutation group in which the product $G\times H$ acts on the disjoint union $X\cup Y$ by the formula
$$
x(g,h) =
\left\{\begin{array}{ll}
xg, & \mbox{if } x\in X\\
xh, & \mbox{if } x\in Y.
\end{array}\right.
$$
If we are given in addition an abstract isomorphism $\psi : G\to H$, then the \emph{parallel sum} $G||_\psi H$ of permutation groups $(G,X)$ and $(H,Y)$ is the permutation group consisting of permutations $(g,\psi(g))$ that acts on the disjoint union $X\cup Y$  by the formula
$$
x(g,\psi(g)) =
\left\{\begin{array}{ll}
xg, & \mbox{if } x\in X\\
x\psi(g), & \mbox{if } x\in Y.
\end{array}\right.
$$ 
Let us note that $G||_\psi H$ is a subgroup of $G\oplus H$. Moreover, 
both the operations $G\oplus H$ and $G||_\psi H$ are commutative, and associative (due to the fact that we consider permutation groups up to permutation isomorphism). If $G=H$ and the isomorphism $\psi$ the identity, we write $G^{(2)}$ for $G||_\psi G$, and more generally,  $G^{(k)}$ in the case of $k\geq 1$ summands. We call it the \emph{parallel multiple} of the permutation group $G$ and adopt the convention  $G^{(1)} = G$. (We refer the reader to \cite{GK} for a more general construction).

{\color{black} We note that if $k\neq m$, then $G^{(k)}$ and $G^{(m)}$ may have different distinguishing numbers (see Theorem~\ref{th:GK} below).}
Note also that $I_n=I_{n-r}\oplus I_r$, for any $0< r < n$. If $G$ has $k$ fixed points, then $G=G_0\oplus I_k$, where $G_0$ has no fixed points. The following example will be used later in our proof.

\begin{Example}\label{ex:1}
{\rm 
Let $\Gamma$ be a graph whose connected components $\Gamma_1,\dots, \Gamma_r$ ($r>1$) are pairwise isomorphic. Assume also that these components are asymmetric, that is, $Aut(\Gamma_i) = I_m$, where $m$ is the order of the component. {\color{black} Since there are no asymmetric graphs of order $2$ through $5$,} 
this is possible only when $m>5$ or $m=1$.
Then, all automorphisms of $\Gamma$ permute the whole components. More precisely, the vertices of $\Gamma$ may be partitioned into subsets  $X_j =\{x_j^1, x_j^2,\ldots, x_j^r\}$, $x_j^i\in \Gamma_i$, $j\leq m, i\leq r$, such that each permutation of the components permutes elements in each $X_j$ in a parallel way. Thus, $Aut(\Gamma)=S_r^{(m)}$. 

For $m=1$ this is $S_r^{(1)}= S_r$. For $m>5$ and $r>2$ this group has a nontrivial normal subgroup $A_r^{(m)}$. For $r=2$, $Aut(\Gamma)=S_2^{(m)}$ is a simple group abstractly isomorphic to the two element group $Z_2$. 
It is easy to see that in the latter case, for $m>1$, $D(\Gamma)=D'(\Gamma)=2$. 
{\color{black}
Indeed, to obtain a requiblack distinguishing coloring it is enough to color black one vertex (resp. edge) in one of the two components.
}
}\end{Example}

{\color{black}
Recall that a group $G$ is simple if it has no nontrivial normal subgroup. There is a huge literature on finite simple groups connected with the classification of finite simple groups and some of these results have been used in our previous paper \cite{GK}. In this paper we apply only the main result of \cite{GK}. It needs some preliminary explanation.

There are many various actions of simple groups, transitive and intransitive, and our result \cite{GK} describes all actions with the distinguished number larger than 2. Since the kernel of the action of a group $G$ is a normal subgroup, it follows that all actions of simple groups are faithful. Therefore we may treat them as permutation groups and consider permutation groups rather than actions. 

By Frucht's theorem we know that for each group $G$ there exists a graph $\Gamma$ whose automorphism group $Aut(\Gamma)$ is \emph{isomorphic} to $G$. Yet, given a permutation group $(G,X)$ we do not know, in general, if there exists a graph $\Gamma=(X,E)$ (on the set of vertices $X$) such that $Aut(\Gamma)=(G,X)$. This problem, known as the concrete version of K\"onig's problem, remains largely unsolved. 
On the other hand, we know that for each group $G$ there exists a regular permutation group abstractly isomorphic to $G$ (given by a regular action of $G$ on itself). By virtue of \cite{god}, each regular permutation group (with some listed exceptions) admits a graphical regular representation (GRR), i.e., it is the automorphism group of some graph. In particular, this holds for all simple groups.
Using this and the constructions of the parallel sum one can obtain easily further examples of intransitive graphs with simple automorphism groups.  Also, various more involved constructions lead to such graphs.  Yet, in general, we know very little about graphs whose automorphism groups are simple (see examples and an open problem on this topic in \cite{GK}). Our approach in this paper is to consider only those simple permutation groups that have the distinguishing number larger than~$2$ and examine them whether they may or may not be the automorphism groups of graphs. For this we need the result formulated below.

 }


In the formulation, we use the standard notation for abstract groups to denote permutation groups in their natural action (see e.g. \cite{DM}). If an action is other than natural, then in the notation
we add the number of points the group in question acts on (in all the cases pointed out this is enough to identify the action). For example, $(L_2(11),11)$ denotes the unique permutation group where the special projective linear group $L_2(11)$ acts on $11$ points (rather than on $12$, as in the standard action). The notation $A_6||_\psi A_6$ is used for the unique parallel sum, where $\psi$ is the exceptional nonpermutation automorphism of $A_6$ (for details see \cite{GK}). We write $D(G)$ for $D(G,X)$.
Recall that fixed points do not affect the distinguishing number: if $G=G_0\oplus I_k$, then  $D(G)=D(G_0)$.

\begin{Theorem}\cite[Theorem 4.1]{GK} \label{th:GK}
Let $G$ be a simple permutation group with no fixed points. Then,
$D(G) = 2$ except for the following cases: 
\begin{enumerate}
 \item 
If $G = A_n^{(k)}$ $(n\geq 5)$, then $D(G)$ is the smallest integer $d$ such that $d^k \geq n - 1$;
\item If $G \in \{L_3(2), M_{11}, M_{12} \}$, then $D(G) = 4$.
\item If $G \in \{ L_2(5), L_2(7), L_2(8)$, $(A_6 , 10), (L_2(11), 11)$, $(M_{11}, 12)$, $L_3(3)$, $(A_8, 15)$, $M_{22}$, $M_{23}, M_{24}, A_6||_\psi A_6\}$ then $D(G) = 3$.
\end{enumerate}
\end{Theorem}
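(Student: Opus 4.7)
My plan is to combine the classification of finite simple groups with the known list of transitive actions of simple groups, and then to check each remaining case individually.

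First I would reduce to the transitive case. Suppose $(G,X)$ is simple and intransitive with no fixed points, with orbits $X_1,\ldots,X_k$. The kernel of the action of $G$ on each $X_i$ is a normal subgroup of $G$, hence trivial (it cannot be all of $G$, since then $X_i$ would consist of fixed points), so every orbit action is faithful and abstractly isomorphic to $G$. Between any two orbits one therefore has an abstract isomorphism $\psi$ between the two copies, and under $\psi$ the induced action on $X_i\cup X_j$ is forced into parallel-sum form: a factor of $G\oplus G$ not of this type would project onto $G$ in both coordinates with an essentially distinct pair of homomorphisms, contradicting simplicity. Iterating, $(G,X)$ is permutation isomorphic to $G_0^{(k)}$ for a transitive simple permutation group $(G_0, Y)$. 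A $d$-coloring of $X$ is then equivalent to an assignment, to each point $y\in Y$, of a $k$-tuple in $\{1,\ldots,d\}^k$.

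Second, I would treat the case $G_0=A_n$ in its natural action for $n\geq 5$. Any coloring in which three points of $Y$ receive the same $k$-tuple is preserved by a $3$-cycle, and any coloring in which two disjoint pairs of points share $k$-tuples is preserved by a product of two transpositions; both permutations lie in $A_n$. Conversely, if at least $n-1$ of the $n$ points carry pairwise distinct $k$-tuples, then the only element of $A_n$ preserving the coloring is the identity. This gives the sharp condition $d^k\geq n-1$ of case~(1). For every other transitive simple $G_0$ I would apply the standard counting bound: if $\sum_{1\neq g\in G}d^{c(g)}<d^{|X|}$, where $c(g)$ is the number of cycles of $g$ on $X$, then $G$ has a regular orbit on $d$-colorings and hence $D(G)\leq d$. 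Using known lower bounds on the minimal degree of a simple group, this inequality is satisfied by $d=2$ for all simple transitive actions except for a short, well-delimited list of small-degree ones.

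Third, I would examine the residue one by one. These are precisely the actions in cases (2) and (3): the Mathieu groups $M_{11},M_{12},M_{22},M_{23},M_{24}$ in their natural actions, the small linear groups $L_2(5),L_2(7),L_2(8),L_3(2),L_3(3)$, the non-natural actions $(A_6,10)$, $(L_2(11),11)$, $(M_{11},12)$, $(A_8,15)$, and the exceptional parallel sum $A_6||_\psi A_6$ built from the outer automorphism of $A_6$. For each one, I would compute $D(G)$ by direct enumeration of partitions into two, three, and four color classes and by checking whether any nontrivial element of the group preserves each such partition, relying on character tables or explicit generators for the sporadic cases.

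The main obstacle is the third step. The first subtlety is proving exhaustively, via the classification of finite simple groups, that the generic counting bound for $d=2$ fails only for the listed actions; this needs careful estimates on minimal degree and on the number of conjugacy classes of prime-power elements. The second subtlety is the exact evaluation of $D(G)$ for each exceptional action: the Mathieu actions and, in particular, the parallel sum $A_6||_\psi A_6$ (whose automorphism twist has no analogue outside $A_6$) each demand a bespoke combinatorial argument rather than a generic probabilistic or counting one.
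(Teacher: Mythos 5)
This theorem is not proved in the present paper at all: it is imported verbatim from the companion paper \cite{GK} (``Distinguishing simple groups''), so there is no in-paper proof to compare your sketch against. Judged on its own merits, your outline follows the natural strategy (reduce to orbits, handle $A_n$ exactly, use the cycle-counting bound $\sum_{g\neq 1}d^{c(g)}<d^{|X|}$ generically, and check a finite residue by machine), but it contains a genuine structural gap in the very first step. From simplicity and Goursat-type reasoning you may correctly conclude that the group is a parallel sum of its faithful orbit actions, but you may \emph{not} conclude that it is a parallel multiple $G_0^{(k)}$ of a single transitive action: the orbits can carry pairwise non-permutation-isomorphic transitive actions of the same abstract group (e.g.\ $A_6$ on an orbit of size $6$ and another of size $10$, or $M_{11}$ on $11$ and on $12$ points), and even when the orbit actions are equivalent the linking isomorphism can be a non-permutational outer automorphism. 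The exceptional entry $A_6||_\psi A_6$ in case (3) is exactly such an object; your step-1 reduction would silently identify it with $A_6^{(2)}$, which is a different permutation group falling under case (1), so your own case analysis is inconsistent with your reduction. A correct argument must treat arbitrary parallel sums of inequivalent transitive actions (typically showing $D=2$ there because a coloring need only be rigid on one orbit, but this has to be argued, not assumed away).

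Beyond that, the second half of your plan is a program rather than a proof: the assertion that the bound $\sum_{g\neq 1}2^{c(g)}<2^{|X|}$ fails only for the listed actions is precisely the hard content of the theorem, and it rests on the classification of finite simple groups together with regular-orbit-on-subsets results in the style of Cameron--Neumann--Saxl and Seress plus minimal-degree estimates; and the exact values $3$ and $4$ in cases (2) and (3), as well as the sharpness analysis for $A_n^{(k)}$ with the outer-twisted $A_6$ anomaly, each require the bespoke verifications you defer. Your $A_n$ computation in step 2 (three equal tuples give an invariant $3$-cycle, two repeated pairs give an invariant double transposition, and $n-1$ distinct tuples suffice) is correct and matches case (1), but as it stands the proposal does not constitute a proof of the theorem.
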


\section{Distinguishing index}

{\color{black}
Observe that for a graph $\Gamma=(V,E)$ we have $D(\Gamma) = D(Aut(\Gamma),V)$ and $D'(\Gamma) = D(Aut(\Gamma),E)$. In fact, for the distinguishing index the definition \emph{via} the action of the automorphism group is a little bit more general. It includes also the graphs having automorphisms of order $2$ that leave all the edges fixed, while in the original formulation of \cite{KP}, the value $D'(\Gamma)$ is undefined in such cases.

}

In particular, the difference concerns the trivial case, when $\Gamma=K_2$ consists of one edge. To avoid considering this special trivial case in our proofs, in the rest of the paper we assume that $\Gamma$ is of size $>1$, that is it has at least two edges. 
Our result is the following.

\begin{Theorem}\label{th:main} 
Let $\Gamma$ be a nontrivial graph of size $>1$. If the automorphism group  $Aut(\Gamma)$ of $\Gamma$ is simple, then the distinguishing index \hbox{$D'(\Gamma) = 2$.} 
\end{Theorem}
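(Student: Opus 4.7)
The plan is to use the identity $D'(\Gamma)=D(G,E)$, where $G=Aut(\Gamma)$ and $E$ is the edge set, and then apply Theorem~\ref{th:GK} to the edge-action. Since $G$ is simple, the kernel of the edge-action is normal in $G$, hence equal to either $\{e\}$ or $G$. I would first dispose of the case where the kernel is all of $G$, i.e., every automorphism fixes every edge setwise. A short degree-analysis shows that in this situation any non-identity element of $G$ must fix every vertex of degree at least $2$ together with every leaf adjacent to such a vertex, so it can move only isolated vertices and swap endpoints of $K_2$-components. Together with the fact that $G$ is the \emph{full} automorphism group of $\Gamma$, the simplicity of $G$ and the condition $|E|>1$ reduce the possibilities to a handful of configurations that can be verified directly, following the pattern of Example~\ref{ex:1}.

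In the main case the edge-action is faithful, so $(G,E)$ is itself a simple permutation group. Stripping any fixed edges and applying Theorem~\ref{th:GK} to $(G,E)$, either $D(G,E)=2$ and we are done, or $(G,E)$ falls into one of the exceptional classes (1)--(3). For each such exception the goal is to show that it cannot be realized as the edge-action of any graph $\Gamma$ with $Aut(\Gamma)=G$. The strategy is: for a hypothetical exceptional $(G,E)$, enumerate the possible vertex-actions $(G,V)$ of the simple group $G$; for each admissible pair reconstruct $E$ as a union of $G$-orbits on $\binom{V}{2}$; and exhibit an automorphism of the resulting graph $(V,E)$ lying outside $G$, contradicting $Aut(\Gamma)=G$. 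The sporadic exceptions --- the Mathieu groups, the small projective linear groups $L_2(5),L_2(7),L_2(8),L_3(3)$, $L_3(2)$, and $A_6||_\psi A_6$ --- are each dispatched individually using their well-understood permutation representations on small vertex sets.

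The main obstacle will be the infinite family $A_n^{(k)}$ with $n>2^k+1$. Here $G$ acts as a parallel sum on $k$ disjoint copies of an $n$-point set, and the analysis underlying Example~\ref{ex:1} indicates that any graph $\Gamma$ with $Aut(\Gamma)=A_n^{(k)}$ must be built from $k$ isomorphic copies of a suitable base graph on $n$ vertices, glued together only through the parallel $A_n$-action. For such graphs one must enumerate the $A_n^{(k)}$-orbits on $\binom{V}{2}$ and determine when the induced edge-action is again of the exceptional form $A_m^{(j)}$ with $m>2^j+1$. I expect that in every such case either the required graph structure forces extra automorphisms (so $Aut(\Gamma)\supsetneq G$), or one can directly exhibit a $2$-coloring of the edges that breaks $G$. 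Because the parameter space $(n,k)$ is infinite and the parallel-sum orbit structure is intricate, this combinatorial analysis will constitute the technical bulk of the proof.
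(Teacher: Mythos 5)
Your opening move is the same as the paper's: pass to the action of $G=Aut(\Gamma)$ on $E$, invoke Theorem~\ref{th:GK}, and reduce to showing that none of the exceptional permutation groups on that list can be realized as the edge-action of a graph whose full automorphism group is $G$. The preliminary kernel discussion is also reasonable and roughly corresponds to the paper's treatment of disconnected graphs and the $S_2^{(m)}$ situation of Example~\ref{ex:1}. The problem is that the heart of the argument --- the non-realizability of the exceptional cases --- is left as an open-ended enumeration whose success you only ``expect.'' For the sporadic groups this might be salvageable by brute force, but for the infinite family $A_n^{(k)}$ your proposed reduction is unjustified and, as stated, wrong: the hypothesis is that the \emph{edge}-action is $A_n^{(k)}$, which puts no a priori constraint forcing the \emph{vertex}-action to split $V$ into $k$ isomorphic copies of an $n$-set glued by a parallel action. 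Example~\ref{ex:1} describes automorphism groups of disjoint unions of asymmetric components and yields $S_r^{(m)}$, not $A_n^{(k)}$; the vertex-action of the abstract group $A_n$ could instead be transitive of some other degree, and nothing in your sketch rules this out or bounds the search.

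What is missing is the arithmetic link between edge-orbits and vertex-orbits that makes the case analysis finite and short. In the paper, one observes that every edge-orbit of an exceptional $(G,E)$ has the common orbit size $n$, and that an edge-orbit meeting a vertex-orbit $Y$ has cardinality $d\,|Y|$ (intransitive case) or $m|X|/2$ (transitive case); hence $|Y|$ divides $n$, respectively $|X|$ divides $2n$. A direct group-theoretic check (Lemma~\ref{l:}) shows the only transitive degrees available to these groups among the divisors of $2n$ are $n$ and $2n$. The degree-$n$ vertex-orbits are then handled structurally: the graph is $n$-uniform, and Lemma~\ref{l:uni} forces $Aut(\Gamma)$ to be a direct sum of parallel powers of symmetric groups, which is never simple; the transitive degree-$2n$ case forces $G\cong A_n$ and is excluded because $A_n$ has no subgroup of index $2n$ for $n>6$ (via \cite[Theorem~5.2A]{DM}), with $n=5,6$ checked by hand. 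Without some replacement for this counting step, your plan does not close: you would have to classify all vertex-actions and all unions of orbitals on $\binom{V}{2}$ for infinitely many pairs $(n,k)$, and you give no mechanism for doing so. You also silently assume $\Gamma$ may be taken connected; the paper proves a separate lemma reducing to that case, and connectivity is genuinely used later (e.g.\ to guarantee an edge between two vertex-orbits and to exclude a perfect matching as the whole edge set).
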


In order to prove this result we need some lemmas. In all of them we assume that $\Gamma=(X,E)$ has at least two edges. 
First we blackuce the problem to the connected graphs.
{\color{black} (Note that in contrast with the distinguishing number, such a blackuction is not trivial. It is true that a graph $\Gamma$ and its complement $\overline{\Gamma}$ have the same automorphism group, and therefore $D(\Gamma) = D(\overline{\Gamma})$. Yet, the distinguishing indexes may be different. For example for a star of order $5$ we have $D'(K_{1,4})=3$ and $D'(\overline{K_{1,4}})=4$.)

}

\begin{Lemma} 
If $\Gamma$ is not connected and its automorphism group is simple, then one of the following holds
\begin{enumerate}
\item there exists a connected component $\Gamma_0$ of $\Gamma$, such that $Aut(\Gamma) = Aut(\Gamma_0)\oplus I_k$ for some $k>0$, and $D'(\Gamma)\leq D'(\Gamma_0)$.
\item $|Aut(\Gamma)|=2$  and $D'(\Gamma)=2$. 
\end{enumerate}
\end{Lemma}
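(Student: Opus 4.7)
The plan is to leverage the standard decomposition of $Aut(\Gamma)$ forced by the component structure of a disconnected graph. Grouping the connected components of $\Gamma$ by isomorphism type $T_1,\dots,T_t$ with $r_j$ copies of $T_j$, every automorphism of $\Gamma$ permutes the copies of each $T_j$ among themselves and restricts to an automorphism of each individual copy, so
\[
Aut(\Gamma) \;=\; \bigoplus_{j=1}^{t}\bigl(Aut(T_j)\wr S_{r_j}\bigr),
\]
where the $j$-th summand acts on the vertices of the $r_j$ copies of $T_j$.

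I would then exploit simplicity of $Aut(\Gamma)$: a direct sum can be simple only when all but one summand is trivial and the remaining one is itself simple. For a summand $Aut(T_j)\wr S_{r_j}$ with $r_j\geq 2$ and $Aut(T_j)$ nontrivial, the base group $Aut(T_j)^{r_j}$ is a nontrivial proper normal subgroup, so such a summand is never simple. Thus every summand falls into one of three patterns: (a) trivial, when $r_j=1$ and $T_j$ is asymmetric; (b) equal to $Aut(T_j)$ itself, with $r_j=1$ and $Aut(T_j)$ simple and nontrivial; (c) equal to $S_{r_j}$ with $T_j$ asymmetric, which is simple only for $r_j=2$ (giving $S_2$). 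At most one summand may be of pattern (b) or (c).

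This yields exactly two scenarios, matching the two conclusions. In the first, a unique type $\Gamma_0:=T_{j_0}$ has $r_{j_0}=1$ and $Aut(\Gamma_0)$ simple nontrivial, while every other component is asymmetric of multiplicity one. Then every vertex outside $\Gamma_0$ is fixed by $Aut(\Gamma)$, so $Aut(\Gamma)=Aut(\Gamma_0)\oplus I_k$ with $k>0$ (since $\Gamma$ is disconnected, at least one such component exists). Because every edge outside $\Gamma_0$ is then fixed pointwise by $Aut(\Gamma)$, and the action on $E(\Gamma_0)$ agrees with that of $Aut(\Gamma_0)$, any distinguishing edge-coloring of $\Gamma_0$ with $D'(\Gamma_0)$ colors extends to a distinguishing edge-coloring of $\Gamma$, yielding $D'(\Gamma)\leq D'(\Gamma_0)$. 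In the second scenario, two isomorphic asymmetric components produce the $S_2$ summand (together possibly with several unique asymmetric components contributing trivial summands), so $|Aut(\Gamma)|=2$ with the non-identity element being the parallel swap of the two copies; by the argument of Example~\ref{ex:1}, marking a single edge of one of these copies in a $2$-coloring breaks this swap, so $D'(\Gamma)=2$.

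The main (and essentially only nontrivial) step is the classification of summands $Aut(T_j)\wr S_{r_j}$ that can be simple, which reduces to the elementary observation that the base group $Aut(T_j)^{r_j}$ is a nontrivial proper normal subgroup whenever both $Aut(T_j)$ and $S_{r_j}$ are nontrivial. Once this is established, the rest of the argument is a routine bookkeeping of the two admissible structural patterns.
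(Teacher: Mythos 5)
Your argument is correct and takes essentially the same route as the paper: the paper arrives at the same two scenarios by noting that the subgroup $H$ of component-preserving automorphisms is normal (hence $H=G$ or $H$ trivial, by simplicity), which amounts to exactly your classification of which summands of the wreath-product decomposition can be nontrivial, with $S_2$ the only simple possibility when components are permuted. The one blemish --- shared with the paper's own proof --- is that in your second scenario the two swapped asymmetric components could in principle be isolated vertices ($m=1$), in which case there is no edge in them to mark and the swap fixes every edge of $\Gamma$; the paper excludes $m=1$ explicitly only in the subcase where all components are isomorphic.
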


\begin{proof}
 
Let $\Gamma_1,\ldots,\Gamma_r$  be the connected components of $\Gamma$, and $X_1,\dots,X_r$ the corresponding partition of $X$ ($r>1$). Let $G=Aut(\Gamma)$ be the automorphism group of $\Gamma$, and $H$ its subgroup  consisting of the automorphisms preserving the connected components of $\Gamma$. It is obvious that $H= Aut(\Gamma_1) \oplus \ldots \oplus Aut(\Gamma_r)$. Moreover, since each automorphism in $G$ preserves the partition into connected components, $H$ is a normal subgroup of $G$. As $G$ is simple, it follows that either $H=G$ or $H$ is the trivial subgroup. 

If $H=G$, then $G$ is a direct sum and each summand $Aut(\Gamma_i)$ is a normal subgroup of $G$. It follows that all these summands but one are trivial (i.e., equal to $I_s$ for some $s$), and exactly one is nontrivial. Consequently, $G=Aut(\Gamma_i)\oplus I_k$ for some $i$ and $k>0$. Moreover, in such a case, each edge-coloring of $\Gamma_i$ breaking the symmetry of $\Gamma_i$ breaks the symmetry of $\Gamma$, and therefore $D'(\Gamma)\leq D'(\Gamma_i)$, as requiblack.

We consider the second case, when $H$ is trivial,  i.e., $H=I_n$. Then each of $Aut(\Gamma_i)$ is trivial, and the only nontrivial automorphisms of $\Gamma$ are those permuting the whole components. In such a case it is easy to describe the structure of $G=Aut(\Gamma)$. 

First, let us assume that all the connected components of $\Gamma$ are isomorphic graphs of order $m\geq 1$. Then the structure is described in Example~\ref{ex:1}.  Thus, the only case when $G$ is simple, is $G=S_2^{(m)}$ and $m=1$ or $m>5$. 
Then, if $m>5$,  the order $|G|=2$,  and $D'(\Gamma)=2$, as requiblack. If $m=1$ and $G=S_2$, then $\Gamma$ is of size $1$, which is excluded from our consideration.

Finally, assume that $H$ is trivial and there are nonisomorphic connected components of $\Gamma$. Then, considering the partition into subsets of isomorphic components, we see that on each such subset the automorphism group acts as described above, independently of other subsets. In consequence,
$$G=S_{r_1}^{(m_1)} \oplus\ldots\oplus S_{r_s}^{(m_s)}$$ with $s>1, r_i>0, m_i>0$. Note that it may happen that $r_i=1$, in which case $S_{r_i}^{(m_i)}=I_{m_i}$.  Without loss of generality we may assume that $r_i > 1$ for all $i<s$, and $r_s\geq 1$ (we made use of the fact that $I_{m_i}\oplus I_{m_j} = I_m$, for some $m$).
Such a group is usually not simple. It has a nontrivial normal subgroup of the form $S_{r_1}^{(m_1)} \oplus I_{k}$, where $k=n-r_1m_1$. The only case when such a group is simple is when it is of this form  itself, i.e., 
$G =S_{2}^{(m)} \oplus I_k$. Then, as before, $|G|=2$ and $D'(G)=2$, completing the proof.
\end{proof}

By the virtue of this lemma, as we wish to prove that $D'(\Gamma)=2$, we may restrict our further study to connected graphs. For the further proof we will need the following observation. There are only two kinds of permutation groups among those listed in Theorem~\ref{th:GK} with $D(G)>2$ that are intransitive. These are  $A_n^{(k)}$, $k>1,n\geq 5$ or $A_6||_\psi A_6$. In both cases they have orbits of equal sizes. The remaining groups are transitive. So we can speak of the (uniquely defined) \emph{size of the orbit} for each of the groups on that list; for transitive groups this size is equal to the degree. We have

\begin{Lemma}\label{l:}
Let $G$ be one of the simple permutation groups listed in Theorem~\ref{th:GK}. Let $n$ denotes the size of the orbit in $G$. Then the only proper divisor $d$ of $2n$, $1<d<2n$, such that $G$ has a transitive action on a set of cardinality $d$ is $d=n$. 
\end{Lemma}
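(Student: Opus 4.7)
The plan is to proceed case by case through the groups listed in Theorem~\ref{th:GK}. A first elementary observation reduces the task: if $d$ is a divisor of $2n$ with $d<2n$, then necessarily $d \leq n$, since $d>n$ would force $2n/d$ to be a positive integer strictly less than $2$, hence $d=2n$. So it suffices to rule out transitive actions of $G$ on $d$ points for each proper divisor $d$ of $2n$ with $1<d<n$.

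Since each $G$ in the list is simple, every nontrivial transitive action is faithful, giving an embedding $G \hookrightarrow S_d$. Hence $d \geq \mu(G)$, where $\mu(G)$ denotes the minimum degree of a faithful permutation representation of $G$. For the abstract groups underlying the entries of Theorem~\ref{th:GK}, these values are classical: $\mu(A_n)=n$ for $n\geq 5$; $\mu(L_2(q))=q$ for $q\in\{5,7,11\}$ and $\mu(L_2(8))=9$; $\mu(L_3(3))=13$; and $\mu(M_k)=k$ for $k\in\{11,12,22,23,24\}$. For the intransitive entries $A_n^{(k)}$ and $A_6||_\psi A_6$, the abstract group is $A_n$ (resp.\ $A_6$), and the orbit size equals $\mu(G)$.

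In the majority of cases the orbit size $n$ already equals $\mu(G)$, so every candidate divisor $d<n$ is automatically smaller than $\mu(G)$, and no such transitive action can exist. The three cases where $n>\mu(G)$ are the non-natural transitive actions $(A_6,10)$, $(M_{11},12)$, and $(A_8,15)$. In the first, the divisors of $2n=20$ in the range $1<d<n$ are $\{2,4,5\}$, all strictly below $\mu(A_6)=6$. In the second, the divisors of $24$ in the range $1<d<12$ are $\{2,3,4,6,8\}$, all strictly below $\mu(M_{11})=11$. Only in $(A_8,15)$ does a divisor---namely $d=10$---exceed $\mu(A_8)=8$, so here a direct subgroup-theoretic argument is required.

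The main (and only) nontrivial obstacle is therefore to show that $A_8$ has no subgroup of index $10$. The plan here is to use the maximal subgroup lattice: the maximal subgroups of $A_8$ have indices $\{8,15,28,35,120\}$. Any subgroup $H$ of index $10$ would be contained in some maximal subgroup $K$, for which $[A_8:K]$ must divide $[A_8:H]=10$; but none of the five maximal indices divides $10$, so no such $H$ exists. This last verification is the most delicate step---everything else reduces to the numerical observation about divisors of $2n$ combined with known values of $\mu(G)$.
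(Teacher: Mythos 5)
Your proof is correct and rests on the same underlying mechanism as the paper's: a transitive action on $d$ points corresponds to a subgroup of index $d$, and one rules these out case by case. Where the paper simply asserts that a computer check (GAP) shows no subgroup of the required order exists in any of the transitive cases, you organize the verification so that almost nothing needs to be computed: the observation that every divisor of $2n$ in the range $(1,2n)$ other than $n$ is in fact $\leq n$, combined with the minimal faithful degree $\mu(G)$, disposes of every entry except the single case $(A_8,15)$ with $d=10$. That is a genuinely cleaner packaging of the same idea, and your treatment of the intransitive entries $A_n^{(k)}$ and $A_6||_\psi A_6$ coincides with the paper's.

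One factual slip: the maximal subgroups of $A_8$ have indices $\{8,15,28,35,56\}$, not $\{8,15,28,35,120\}$. The index-$56$ subgroup is the $3$-set stabilizer $(S_3\times S_5)\cap A_8$ of order $360$, which you omit; the transitive degree-$8$ copy of $L_2(7)$, of index $120$, is \emph{not} maximal in $A_8$, since it lies inside $AGL(3,2)=2^3{:}L_3(2)$. The error is harmless here because neither $56$ nor $120$ divides $10$, so your divisibility argument still excludes a subgroup of index $10$; but the list as stated is wrong and should be corrected. Alternatively, you could dispose of this one remaining case by the same result the paper invokes in its final lemma, namely \cite[Theorem~5.2A]{DM}: a subgroup of $A_8$ of index $10<\binom{8}{2}=28$ would have to have index in $\{1,\dots,8\}$, equal to $\frac12\binom{8}{4}=35$, or equal to the exceptional value $15$, none of which is $10$.
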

\begin{proof}
First observe that the claim is true for intransitive groups on the list, $A_n^{(k)}$ and $A_6||_\psi A_6$. This is so, since there is no nontrivial action of $A_n$ on a set of cardinality $d < n$, which implies easily that there is no nontrivial action of $A_n$ on a set of cardinality $d < 2n$, where $d$ is a divisor of $2n$ other than $n$.  

If $(G,X)$ is transitive and has a transitive action on a set of cardinality $d$, then this action is equivalent to the action of $G$ on the $H$-cosets of $G$, where $H=G_x$ is the stabilizer of a point $x\in X$. Then, the index $|G :H| = d$ (cf. \cite[p.~22]{DM}).  This is equivalent to the existence a subgroup of $G$ of cardinality $|G|/d$. 

One checks directly (using GAP or other system of combinatorial computation) that for none of the transitive permutation groups listed in Theorem~\ref{th:GK} exists a subgroup of such cardinality. 
For example, consider $G =(A_6,10)$. Then $|A_6|=360$ and the proper divisors of $2n=20$ are $2,4$ and $5$. Yet, we know that  $A_6$ has no subgroup of cardinality $180, 90$ or $72$.
\end{proof}

Note that the assumption that $d$ is a divisor is essential. For example,  in the action of $A_6$ on $n=10$ points we know that there is an action on $6$ points corresponding to 
a subgroup of $A_6$ of cardinality $60$. Yet, $6$ is not a divisor of $20$.

For the next lemma we establish additional terminology. Let $\Gamma = (X,E)$ be a graph, and $Aut(\Gamma)$ its automorphism group. Then by the \emph{orbits} of $\Gamma$ we mean the orbits of $Aut(\Gamma)$, and by the \emph{edge-orbits} of $\Gamma$ we mean the orbits of $Aut(\Gamma)$ in its action on the edges of $\Gamma$. The edge-orbits should be distinguished from the orbitals of  $Aut(\Gamma)$, which are the orbits of $Aut(\Gamma)$ in its action on two-element subsets.  Each edge-orbit is an orbital, but an orbital needs not to contain any edge. The fixed points of $Aut(\Gamma)$ form orbits of cardinality $1$, which are called \emph{trivial}.

We will need one more technical lemma concerning a special kind of graphs. 
Given $n>2$, call a graph $\Gamma$ \emph{$n$-uniform} if each nontrivial orbit of $\Gamma$ has size $n$, and each edge-orbit has the same size $n$.  We have the following.

\begin{Lemma}\label{l:uni}
Let $\Gamma$ be an $n$-uniform graph, $n>3$, and $G=Aut(\Gamma)$ its automorphism group. Assume that 
$\Gamma$ has $r\geq 1$ nontrivial orbits  and  $m\geq 0$ is the number of  fixed points of $G$. If the action of  $G$ on each nontrivial orbit is $2$-transitive, then $G= S_n^{(r_1)} \oplus\ldots\oplus S_n^{(r_s)}\oplus I_m$ for some $s\geq 1$ and $r_1+\ldots+r_s=r$ $($for $m=0$ the component $I_m$ is absent$)$.
\end{Lemma}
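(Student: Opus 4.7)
The plan is to deduce the structure of $G$ from the combinatorial restrictions imposed by $n$-uniformity together with the $2$-transitivity assumption, in three stages: classify the possible edges, pin down the matching bijections between orbits, and then read off $G=Aut(\Gamma)$ from the resulting picture.

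First I will classify the edges of $\Gamma$. Inside a nontrivial orbit $X_i$, $2$-transitivity forces the induced subgraph to be complete or empty; a complete $X_i$ would contribute one edge-orbit of size $\binom{n}{2}$, and $n$-uniformity would then force $\binom{n}{2}=n$, i.e.\ $n=3$, contradicting $n>3$, so no internal edges exist. For a fixed point $v$ the set $N(v)\cap X_i$ is $G$-invariant, hence equals $\emptyset$ or $X_i$ (a full star of $n$ edges, matching the edge-orbit size). For two distinct nontrivial orbits $X_i,X_j$, any edge-orbit $O$ between them is bi-regular by transitivity: each $x\in X_i$ is incident to the same number $d$ of edges of $O$, so $|O|=nd$. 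The assumption $|O|=n$ forces $d=1$, so $O$ is a perfect matching and defines a bijection $\phi_O\colon X_i\to X_j$ satisfying $\phi_O(xg)=\phi_O(x)g$ for all $g\in G$.

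Next I will show that at most one edge-orbit can lie between any two orbits. If $\phi_1,\phi_2\colon X_i\to X_j$ were two such $G$-equivariant matchings, then $\phi_1^{-1}\phi_2$ would centralize the $2$-transitive group $G|_{X_i}$ inside $\mathrm{Sym}(X_i)$. A $2$-transitive group on more than two points is primitive, and because its point stabilizer is nontrivial it is non-regular, so its centralizer in the symmetric group is trivial; hence $\phi_1=\phi_2$. I then group the nontrivial orbits into clusters $C_1,\dots,C_s$ by declaring $X_i$ and $X_j$ equivalent when they are joined by a chain of matching edge-orbits. Composing matchings along such a chain yields $G$-equivariant bijections between any two orbits of the same cluster, and the uniqueness above forces different chains to produce the same composite bijection. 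Identifying all orbits of $C_j$ with a single model $n$-set via these canonical bijections, every $g\in G$ then acts as a single permutation of the model set simultaneously on all $r_j=|C_j|$ orbits of $C_j$.

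Finally I will identify $G$ explicitly. For any $\pi\in S_n$ and any cluster $C_j$, the permutation that acts as $\pi$ diagonally on $C_j$ (via the chosen identifications) and as the identity elsewhere preserves every edge of $\Gamma$: there are no internal edges, full/empty fixed-point stars are unaffected, the matchings inside $C_j$ coincide with the identification and so are preserved by $\pi$, and no edge joins $C_j$ to another cluster (otherwise the two clusters would have merged along that matching). Therefore $Aut(\Gamma)\supseteq S_n^{(r_1)}\oplus\cdots\oplus S_n^{(r_s)}\oplus I_m$. Conversely, every $g\in G=Aut(\Gamma)$ preserves each orbit (orbits are defined by $G$) and is forced by the matching constraints to act diagonally on each cluster, which supplies the reverse inclusion and hence the claimed decomposition. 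I expect the subtlest step to be the centralizer argument, which has to do double duty: ruling out a second matching between two given orbits and simultaneously guaranteeing that the cluster identifications obtained along different matching paths actually coincide, so that the model-set identification of each cluster is globally well-defined.
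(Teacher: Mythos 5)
Your proof is correct and follows the same overall decomposition as the paper's: rule out edges inside a nontrivial orbit, reduce fixed-point attachments to full or empty stars, show that every edge-orbit joining two nontrivial orbits is a $G$-equivariant perfect matching, and then read off $S_n^{(r_1)}\oplus\cdots\oplus S_n^{(r_s)}\oplus I_m$ from the connected components of the graph of essential edges. The one place where you genuinely diverge is the crucial uniqueness step. The paper argues combinatorially: if some $y\in Y$ were joined by essential-edge paths to two distinct vertices $z,x$ of another orbit $Z$, it takes $g$ fixing $z$ but moving $x$ (by $2$-transitivity on $Z$) and follows a path from $z$ to $x$ edge by edge, using the fact that a matching edge-orbit meets each vertex at most once to force $g$ to fix every vertex along the path, a contradiction. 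You instead observe that any two $G$-equivariant bijections between two orbits differ by an element of the centralizer of the $2$-transitive group $G|_{X_i}$ in $\mathrm{Sym}(X_i)$, which is trivial because such a group is primitive and non-regular. This is cleaner and, as you note, does double duty: it excludes a second matching between adjacent orbits and simultaneously makes the cluster identification independent of the chosen chain (including closed chains), a point the paper handles by the same path argument. The only cosmetic omission is that you never mention edges joining two fixed points; these are in fact excluded by $n$-uniformity (they would form edge-orbits of size $1$), and in any case both $G$ and your candidate group fix all fixed points, so nothing is lost.
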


\begin{proof}
First note that if $Y$ is a nontrivial orbit of $\Gamma$, then since $G$ on $Y$ is $2$-transitive, the graph $\Gamma$ restricted to $Y$ is either empty or complete. The latter however is excluded, since $|Y|=n>3$ and each edge-orbit has size $n$.

Now, if $\Gamma$ has only one nontrivial orbit $Y$, and $x$ is a vertex fixed by $G=Aut(\Gamma)$ such that there is an edge joining $x$ an a vertex $y\in Y$, then since $G$ is $2$-transitive on $Y$, each vertex in $Y$ is joined by an edge with  $x$. It follows that $G = S_n\oplus I_m$, as requiblack. 

So, assume that $r>1$. Then the situation with fixed points is similar. If $Y$ is one of nontrivial orbits and $x$ is a fixed point, then either all vertices in $Y$ are joined by an edge with $x$ or none of them. 

Let $Z$ be another nontrivial orbit and let $\{y,z\}$ be an edge joining $y\in Y$ and $z\in Z$.
Since $G$ is transitive on $Y$ and $Z$, it follows that the edge-orbit $O$ containing $\{y,z\}$ consists of $n$ independent edges. 

Call the edges that join two nontrivial orbits \emph{essential}, to distinguish them from the edges incident to fixed points. We wish to show that for any two different nontrivial orbits $Y$ and $Z$ that are connected by a path of essential edges, the whole system of essential edges determines uniquely a one-to-one correspondence between $Y$ and $Z$. 

Let $y\in Y$ and $z\in Z$ be such that there is a path consisting of essential edges joining $y$ and $z$. Suppose also that there is another vertex $x\in Z$, $x\neq z$, such that there is a path consisting of essential edges joining $y$ and $x$. Since $G$ is 2-transitive on $Z$, there exists $g\in G$ such that $zg=z$  and $xg \neq x$. 

Now, by what we have assumed, there exists a path from $z$ to $x$ consisting of essential edges. Let $(z,z_1)$ be the first edge on this path. Consider the image $(z,z_1)g = (zg,z_1g)$ of the edge $(z,z_1)$ under $g$. Since $zg=z$ and the edge-orbit containing $(z,z_1)$ consists of independent edges, it follows that $z_1g=z_1$. This argument works for every next edge on the path from $z$ to $x$. Consequently, $xg=g$, which contradicts the assumption on $g$.

It follows that if  $Y$ and $Z$ are two nontrivial orbits that are joined by a path consisting of essential edges, then for each vertex $y\in Y$ there exist a unique vertex $z\in Z$ such that there is a a path of essential edges joining $y$ and $z$. Moreover this forms a one-to-one correspondence between vertices of $Y$ and $Z$. In particular, if there is and edge joining $Y$ and $Z$, then there are exactly $n$ edges joining $Y$ and $Z$; they are independent and  form an edge-orbit of $\Gamma$.

Now we can observe that for every permutation of vertices in a nontrivial orbit $Y$ there exist corresponding permutations of vertices in other nontrivial orbits such that their composition preserves essential edges, and in consequence all edges of $\Gamma$. In other words, every permutation of $Y$ may be extended to an automorphism of $\Gamma$. 

More precisely, let $\Gamma^*$ be the graph obtained from $\Gamma$ by deleting the fixed points of $\Gamma$. Then the edges of $\Gamma^*$ are exactly the essential edges of $\Gamma$. Suppose that $\Gamma^*$ has $s\geq 1$ connected components, and let $\Gamma_i$ be a connected component of $\Gamma^*$ containing $r_i\geq 1$ nontrivial orbits of $\Gamma$. Then, since all orbits of $\Gamma_i$ are connected by essential edges, $Aut(\Gamma_i) = S_n^{(r_i)}.$ Observe that automorphisms of various connected components $\Gamma_i$ can be composed to form an automorphism of $\Gamma$, and each automorphism of $\Gamma$ is of this form. Consequently, $G= S_n^{(r_1)} \oplus\ldots\oplus S_n^{(r_s)}\oplus I_m$, as requiblack.

\begin{figure}\label{fig1}
\begin{tikzpicture}[auto,inner sep=1pt, 
minimum size=2pt]

  
  \draw  (1,1) ellipse (6mm and 7mm);
    \draw  (4,1) ellipse (6mm and 7mm);
   \draw  (8,1) ellipse (6mm and 7mm); 
      \draw  (2.5,3) ellipse (6mm and 7mm); 
      
      
     %
     \draw (4.5,3.3)  node[circle,fill=black,draw]{}; 
     \draw[-] (6,1)--(4.5,3.3); 
      \draw (6,1) node[circle,fill=black,draw]{};

\draw[-] (1.1,1.75)--(1.9,2.8);       
\draw[-] (1.26,1.69)--(1.96,2.59); 
\draw[-] (1.4,1.6)--(2.06,2.44);
\draw[-] (1.5,1.45)--(2.2,2.35);

\draw[-] (3.9,1.75)--(3.1,2.8);
\draw[-] (3.74,1.69)--(3.04,2.59);
\draw[-] (3.6,1.6)--(2.94,2.44);
\draw[-] (3.5,1.45)--(2.8,2.35);
      
\draw[-] (1.63,1.28)--(3.37,1.28);
\draw[-] (1.66,1.1)--(3.34,1.1); 
\draw[-] (1.66,0.94)--(3.34,0.94); \draw[-] (1.63,0.76)--(3.37,0.76);

 \draw[-] (6,1)--(4.6,1.28); 
  \draw[-] (6,1)--(4.65,1.1); 
   \draw[-] (6,1)--(4.65,0.94); 
    \draw[-] (6,1)--(4.6,0.76); 
    
    \draw[-] (6,1)--(7.4,1.28); 
 \draw[-] (6,1)--(7.35,1.1); 
   \draw[-] (6,1)--(7.35,0.94); 
    \draw[-] (6,1)--(7.4,0.76); 
  

  \draw[-] (4.5,3.3)--(2.9,3.6); 
   \draw[-] (4.5,3.3)--(3.03,3.42); 
    \draw[-] (4.5,3.3)--(3.1,3.25); 
       \draw[-] (4.5,3.3)--(3.15,3.05);


 
\draw (1,1) node {$X_1$}; 
\draw (2.5,3) node {$X_2$};  
\draw (4,1) node {$X_3$};  
\draw (8,1) node {$X_4$};  
\draw (6,0.7) node {$x$};  
\draw (4.6,3.6) node {$y$}; 
\draw (6,4) node {}; 


  \end{tikzpicture}
\caption{An $n$-uniform graph $\Gamma$ with $Aut(\Gamma) = S_n^{(3)} \oplus I_2 \oplus S_n$.}
\end{figure}
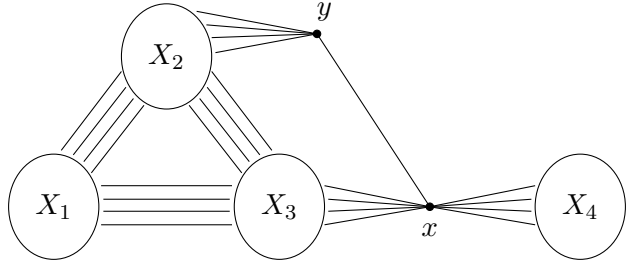

An illustration is given in Figure~1. It contains a sketch of an $n$-uniform graph with $r=4$ nontrivial orbits $X_i$  (each of the size $n$), $m=2$ fixed points $x$ and $y$, and $s=2$ nontrivial connected components in $\Gamma^*$. One can easily see that $Aut(\Gamma) =  S_n^{(3)}\oplus I_2\oplus S_n$.  
\end{proof}

Now we are ready to prove the following lemma dealing with connected graphs that are intransitive.

\begin{Lemma}
If  $\Gamma$ is connected and intransitive and $Aut(\Gamma)$ is simple, then $D'(\Gamma)=2$. 
\end{Lemma}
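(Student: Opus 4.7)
The plan is to argue by contradiction using Theorem~\ref{th:GK} together with Lemmas~\ref{l:} and~\ref{l:uni}. Suppose $D'(\Gamma)\geq 3$ and set $G=Aut(\Gamma)$. Since $\Gamma$ is connected with at least two edges, no nontrivial automorphism can fix every edge setwise (two adjacent edges pin down their common endpoint, and connectedness propagates this), so the action of $G$ on $E$ is faithful. Let $E_0$ be the set of edges not fixed by $G$; then $D(G,E)=D(G,E_0)\geq 3$, and Theorem~\ref{th:GK} identifies $(G,E_0)$ with one of the permutation groups on its list. In every listed case the common edge-orbit size $n$ satisfies $n\geq 5$, and $G$ acts $2$-transitively on each edge-orbit.

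I would then constrain the nontrivial vertex orbits. For such an orbit $Y$ the action $(G,Y)$ is faithful and transitive, so $|Y|$ is the index of a proper subgroup of $G$. A standard incidence count shows that if an edge-orbit $\mathcal{O}\subseteq E_0$ has both endpoints in a single vertex orbit $Y$, then $|Y|$ divides $2n$, whereas if $\mathcal{O}$ joins two distinct vertex orbits $Y, Z$, then $|Y|$ and $|Z|$ each divide $n$. Combined with Lemma~\ref{l:}, this forces $|Y|\in\{n,2n\}$ for every nontrivial $Y$. The boundary value $|Y|=2n$ is eliminated as follows: if $|Y|=2n$, the stabilizer in $G$ of an edge $e\in E_0$ (of order $|G|/n$) would have to fix an endpoint of $e$ in the degree-$2n$ action (when $\mathcal{O}$ joins two orbits), or stabilize an unordered pair of vertices in it (when $\mathcal{O}$ lies inside $Y$). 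A direct inspection of each group on the list of Theorem~\ref{th:GK} rules this out. Hence every nontrivial vertex orbit has size exactly $n$, and its action on $Y$ is permutation isomorphic to the $2$-transitive action on an edge-orbit.

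At this stage $\Gamma$ is $n$-uniform with $n\geq 5 > 3$ and the action on each nontrivial orbit is $2$-transitive, so Lemma~\ref{l:uni} applies and yields
\[
G = S_n^{(r_1)}\oplus\cdots\oplus S_n^{(r_s)}\oplus I_m.
\]
But for $n\geq 5$ the subgroup $A_n^{(r_1)}\oplus\cdots\oplus A_n^{(r_s)}\oplus I_m$ is a proper nontrivial normal subgroup of $G$, contradicting simplicity of $G$. This contradiction forces $D'(\Gamma)=2$. The main obstacle is the elimination of $|Y|=2n$ in the previous step: Lemma~\ref{l:} does not exclude it, and for each listed simple group one must verify using its subgroup lattice (e.g.\ by a GAP computation, in the same spirit as the proof of Lemma~\ref{l:}) that no index-$2n$ action admits an edge-orbit stabilizer with the requisite fixed endpoints. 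This is the only place where the explicit list from Theorem~\ref{th:GK}, rather than abstract simplicity alone, is essential.
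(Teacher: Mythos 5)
Your overall strategy coincides with the paper's: pass to the edge action, identify $(G,E_0)$ with a group from Theorem~\ref{th:GK} so that all edge-orbits have the common size $n\geq 5$, use incidence counting plus Lemma~\ref{l:} to pin down the sizes of the nontrivial vertex orbits, conclude that $\Gamma$ is $n$-uniform with $2$-transitive orbit actions, and contradict simplicity via Lemma~\ref{l:uni}. The divisibility counts and the final contradiction are all correct.

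The genuine gap is your treatment of the case $|Y|=2n$, which you yourself flag as ``the main obstacle.'' You propose to eliminate it by ``a direct inspection of each group on the list'' of Theorem~\ref{th:GK} (e.g.\ by GAP), but you never carry this out, and as stated it cannot be carried out: the list contains the infinite family $A_n^{(k)}$, $n\geq 5$, so no finite computation inspects every case. (Indeed, in the transitive setting, where degree $2n$ really does occur, the paper has to invoke the Dixon--Mortimer classification of low-index subgroups of $A_n$ to handle exactly this point.) Moreover, of your two sub-cases only the cross-orbit one is immediate --- there $G_e$ fixes both endpoints, so $|G|/n=|G_e|\leq |G_y|=|G|/(2n)$, a contradiction --- while the sub-case where the edge-orbit is a perfect matching inside $Y$ is consistent with the order counting and is left entirely to the unperformed inspection. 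The repair is simple and is what the paper does: since $\Gamma$ is connected and \emph{intransitive}, every nontrivial vertex orbit $Y$ is a proper subset of $X$, hence is joined by an edge to a \emph{different} orbit $Z$; the edge-orbit $O$ of that edge satisfies $|O|=d|Y|$, so $|Y|$ divides $n$, and Lemma~\ref{l:} forces $|Y|=n$ outright. The value $2n$ never needs to be considered in the intransitive case, so your hardest step is both incomplete and avoidable.
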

\begin{proof}
We consider the action of $G=Aut(\Gamma)$ on the set $E$ of the edges of $\Gamma$. Since all nontrivial actions of simple groups are faithful, we have a permutation group $(G,E)$. We prove that if $D(G,E) > 2$, then $(G,X)$ is not the automorphism group of any graph.

Let us assume, to the contrary,  that $(G,E)$ is one of the permutation groups on the list of Theorem~\ref{th:GK}, possibly with some fixed points added. Then, by the remark preceding Lemma~\ref{l:},  all  orbits of $(G,E)$  have the same size $n\geq 5$.


Consider the action of $G$ on $X$. (Note that while $D(G,E)>2$, the action of $G$ on $X$ may be different and may have a different distinguishing number). Since, by assumption, $(G,X)$ is intransitive, it has at least two orbits, and at least one of them is nontrivial. Let $Y$ be a nontrivial orbit of $(G,X)$. Then, since $\Gamma$ is connected and intransitive, there exists another orbit $Z$ (possibly trivial) such that there is an edge in $\Gamma$ joining $Y$ and $Z$.  Let $O$ denotes the edge-orbit containing this edge. Note that, if $d>1$ is the number of the edges in $O$ joining some point $y\in Y$ with $Z$, then each point in $Y$ is joined with $Z$ by exactly $d$ edges from $O$. Consequently, the cardinality $|O|=d|Y|$. 
Since  $|O|=n$ is the size of the orbit in $(G,E)$, we infer that $|Y|$ is a divisor of $n$, and by Lemma~\ref{l:}, $|Y|=n$. It follows that all the orbits in $(G,X)$ are of the size $n$ or $1$. 

Thus we have proved  that the graph $\Gamma$ is $n$-uniform. By Lemma~\ref{l:uni}, the automorphism group of $\Gamma$ is not simple. Indeed, the direct product of nontrivial components is not simple, and if $G=S_n^{(r)}+I_m$, then it has a normal subgroup $A_n^{(r)}+I_m$. This is a contradiction proving the lemma.
\end{proof}

To complete the proof of Theorem~\ref{th:main} it remains to prove the following.

\begin{Lemma}
If a connected graph $\Gamma$ is transitive and $Aut(\Gamma)$ is simple, then $D'(\Gamma)=2$. 
\end{Lemma}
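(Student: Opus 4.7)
My plan is to argue by contradiction, in the spirit of the intransitive case. Assume $D(G,E)>2$. Since $G$ is simple, the action on $E$ is faithful; and since $\Gamma$ has at least two edges and $G$ is transitive on $X$, a short check rules out any edge being fixed by all of $G$ (otherwise $|X|\leq 2$, which would force $|E|\leq 1$). Hence $(G,E)$ must be exactly one of the groups listed in Theorem~\ref{th:GK}, and all of its edge-orbits have a common size $N\geq 5$.

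Next I would exploit vertex-transitivity. For each edge-orbit $O_i$, let $d_i$ denote the number of edges of $O_i$ incident to a fixed vertex $u$; by transitivity $d_i$ is independent of the choice of $u$, and $|O_i|=d_i|X|/2$. From $|O_i|=N$ we obtain $|X|d_i=2N$, so $|X|$ divides $2N$. Applying Lemma~\ref{l:} to the transitive action $(G,X)$ then limits the possibilities to $|X|\in\{N,2N\}$.

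The core of the proof is to rule out both cases. If $|X|=2N$, then every $d_i=1$, so at each vertex $u$ each edge-orbit contributes a unique edge $\{u,v_i\}$; then $G_u$ stabilizes this edge setwise and fixes $u$, hence fixes $v_i$. By connectivity, the symmetric inclusion $G_u\subseteq G_v\subseteq G_u$ along every edge yields $G_u=G_v$ for all $v\in X$, so this common stabilizer is the kernel of the faithful action, hence trivial. Thus $(G,X)$ is regular with $|G|=|X|=2N$, and a direct inspection shows that none of the groups listed in Theorem~\ref{th:GK} has order $2N$. If $|X|=N$, then each $d_i=2$ and the valency of $\Gamma$ is $v=2r$, where $r$ is the number of edge-orbits. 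In the sub-case $(G,E)$ transitive, $r=1$ and $v=2$, so $\Gamma$ is the cycle $C_N$ and $Aut(C_N)=D_N$ is not simple. In the sub-case $(G,E)$ intransitive, we must have $(G,E)=A_n^{(k)}$ with $k\geq 2$ or $(G,E)=A_6||_\psi A_6$; the abstract group is then $A_n$ (respectively $A_6$), and $(G,X)$ is its natural $2$-transitive action on $n$ (respectively $6$) points. But $2$-transitivity forces $\Gamma$ to be complete or empty, and $Aut(K_n)=S_n\neq G$.

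The main obstacle I anticipate is the sub-case $|X|=N$ with $(G,E)$ intransitive: I must justify that a transitive action of $A_n$ on $n$ points (for $n\geq 5$) is necessarily the natural $2$-transitive one. This is a standard fact from the subgroup structure of $A_n$ but deserves a brief remark. The companion numerical check in the case $|X|=2N$, verifying $|G|\neq 2N$ for each of the finitely many groups in Theorem~\ref{th:GK}, is routine.
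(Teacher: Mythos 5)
Your proof is correct, and while it shares the paper's skeleton (assume $(G,E)$ is one of the groups of Theorem~\ref{th:GK}, count incidences to get $n=m|X|/2$, invoke Lemma~\ref{l:} to force $|X|\in\{n,2n\}$), it diverges genuinely in how the two cases are killed. For $|X|=2n$ the paper first disposes of the sub-case where $(G,E)$ is transitive (a perfect matching cannot be connected) and then, for $(G,E)=A_n^{(k)}$ or $A_6||_\psi A_6$, rules out transitive actions of $A_n$ on $2n$ points by ad hoc checks for $n=5,6$ and by the Dixon--Mortimer classification of subgroups of small index in $A_n$ for $n>6$. Your observation that every edge-orbit is then a perfect matching, so that $G_u$ fixes all neighbours of $u$ and connectivity forces $G_u=G_v$ throughout, hence $G_u=1$ and $|G|=|X|=2n$, replaces all of that with a single uniform regularity argument followed by the trivial numerical check that no group on the list has order twice its orbit size; this is more elementary, avoids the appeal to \cite[Theorem~5.2A]{DM}, and handles the transitive and intransitive sub-cases at once. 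For $|X|=n$ the paper argues via uniqueness (up to equivalence) of the degree-$n$ action and $2$-transitivity; you split into the cycle sub-case ($Aut(C_n)$ dihedral, not simple) and the $A_n$-on-$n$-points sub-case, which amounts to the same $2$-transitivity fact. The only point you should spell out --- as you yourself note --- is that a transitive action of $A_n$ on $n$ points is $2$-transitive for all $n\geq 5$, including $n=6$ where there are two inequivalent such actions but both are $2$-transitive; this is no heavier than what the paper itself assumes there.
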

\begin{proof}
As in the previous proof, for $G=Aut(\Gamma)$, we assume that $(G,E)$ is one of the permutation groups on the list of Theorem~\ref{th:GK}, possibly with some fixed points added, and $n\geq 5$ is the size of the orbit in $(G,E)$. 
We show that $(G,X)$ is not the automorphism group of any graph.

Let $O$ be an edge-orbit of $(G,X)$, and let $m$ be the number of edges adjacent to a vertex $x\in \Gamma$ belonging to $O$. Since 
$(G,X)$ is now transitive, this number is the same for every vertex $x\in \Gamma$. Therefore, the cardinality $n=m|X|/2$.  
By Lemma~\ref{l:}, either $m=1$ and $|X|=2n$ or $m=2$ and $|X|=n$.  

The latter leads immediately to a contradiction. Indeed, since up to equivalence there is only one action of $G$ on $n$ elements,  $(G,X)$ is $2$-transitive, and the only $2$-transitive automorphism group of a graph is $G=S_n$, which is not simple. 

Consider the case when $|X|=2n$. 
If $(G,E)$ is transitive, then the size of the orbit $n=|E|$, and since $\Gamma$ is transitive, $E$ consists of $n$ independent edges. However, this contradicts the fact that $\Gamma$ is connected. So we may assume that $(G,E)$ is not transitive. Then by Theorem~\ref{th:GK}, either 
$(G,E)=A_n^{(k)}$ or $A_6 ||_\psi A_6$, that is, $G$ is abstractly isomorphic  to $A_n$. So, since $\Gamma$ is transitive, all we need is to check all transitive actions of $A_n$, $n\geq 5$, on $2n$ points. 

For $A_5$ we check directly that there exists a transitive action of $A_5$ on $10$ elements, but this action has two orbitals of sizes $15$ and $30$,  so it cannot have edge-orbits of size $n$. 
For $A_6$ there is no action on $12$ points, at all. 
For $n>6$ we apply \cite[Theorem~5.2A]{DM}  describing all subgroups of $A_n$ of small index (cf. \cite{lieb}). Again we use the correspondence between the transitive actions of $G$ and its subgroups. Using this, we show that for $n>6$ there is no subgroup of $A_n$ of index $2n$. In fact, we
need only the following conclusion from \cite[Theorem~5.2A]{DM} concerning the index alone. If $A=A_n$ is an alternating group with $n\geq 5$, and $G$ is its subgroup of index $|A_n:G| < \binom{n}{r}$ for some $1\leq r\leq n/2$, then one of the following holds:
\begin{enumerate}
    \item[(i)] $\binom{n}{s} \leq |A_n:G| \leq \binom{n}{s}s!$ for some $s<r$,
    \item[(ii)] $n=2m$ is even and $|A_n:G|= \frac{1}{2}\binom{n}{m}$, or
    \item[(iii)] (\emph{exceptional cases}) the pair $(n,|A_n:G|)$ belongs to the following set  $\{(6,15),(5,6), (6,6), (7,15), (8,15), (9,120)\}$.
\end{enumerate}
We check for possibilities that $|A_n:G| = 2n$ for $n>6$.
For (i), if $s>1$, then  we have
$|A_n:G| \geq \binom{n}{s}\geq \binom{n}{2} > 2n$, and if $s=1$ we get $|A_n:G|=n$.
Hence for $n>6$ there are no subgroups $G$ of $A_n$ of index $2n$ satisfying (i). 
For (ii), 
if $n=2m>6$, then  $\frac{1}{2}\binom{n}{m} > 2n$, which means that also no subgroup satisfying this condition has index $2n$. Finally, we see that no pair in (iii) is of the form $(n,2n)$, which completes the proof. 
\end{proof}

\end{document}